\documentclass{amsart}

\usepackage[shortlabels]{enumitem}
\usepackage[colorlinks=true,linkcolor=blue,citecolor=blue,urlcolor=blue]{hyperref}

\newtheorem{lem}[equation]{Lemma}
\newtheorem{prop}[equation]{Proposition}

\newtheorem{longthm}[equation]{Longest Root Theorem}
\newtheorem{shortthm}[equation]{Shortest Root Theorem}

\theoremstyle{definition}
\newtheorem{eg}[equation]{Example}

\theoremstyle{remark}

\newcommand{\R}{\mathbb{R}}
\newcommand{\C}{\mathbb{C}}
\newcommand{\vol}{\operatorname{vol}}

\newcommand{\abs}[1]{|{#1}|}

\begin{document}

\title{The longest and shortest roots of a real cubic}
\author{Jason Bland}
\author{Skip Garibaldi}
\author{Joel Rosenberg}


\subjclass[2020]{Primary: 12D10; Secondary: 26C10, 33C05, 30B10}
\keywords{cubic polynomial, root, discriminant, hypergeometric function,
trinomial}

\begin{abstract}
There are many formulas in the literature providing roots of a real cubic that avoid some of the well-known pathologies of Cardano's formulas.  Among these, we identify two that consistently provide the unique roots of a depressed cubic that have the greatest and smallest absolute value, whenever those exist.  We call these the longest and shortest roots.
The existence conditions are elementary  and are in terms of the signs of the coefficients and the discriminant.  Our proofs use two algebraic
identities satisfied by hypergeometric functions; once the standard
real branches are fixed, the root comparisons are entirely real.  As an
application, the longest-root formula gives an explicit factorization of all
but a vanishing proportion of depressed real quartics.
\end{abstract}

\maketitle

\section{Introduction}
Over the last 500 years, humanity has come up with various formulas for the roots of a cubic polynomial with real coefficients, see \cite{Fettis} or \cite{Tignol} for surveys.  The main point of this article is that two of those formulas, which appeared in an article by Zucker \cite{Zucker}, consistently pick out the roots of the cubic that have the greatest or least absolute value, whenever that makes sense.

Zucker expressed the three roots of 
\[
  f(t)=t^3+pt+q \quad \in\R[t],
\]
using formulas in which every displayed quantity is real---no
excursion into the complex plane, and at worst a single square root.  Write
\begin{equation}\label{disc}
  \Delta:=-4p^3-27q^2
\end{equation}
for the discriminant of $f$. For $p \ne 0$, put
\begin{equation}\label{A.def}
  A:=\frac{-\Delta}{4p^3}=1+\frac{27q^2}{4p^3}.
\end{equation}
His formulas are in terms of Gauss's
hypergeometric function
\begin{equation}\label{hyper.def}
 F(a,b;c;z)=1+\frac{ab}{c}\frac{z}{1!}
 +\frac{a(a+1)b(b+1)}{c(c+1)}\frac{z^2}{2!}+\cdots.
\end{equation}
The two roots we are most interested in are
\begin{align}
 \alpha(p,q)&:=\frac{3q}{p}
 F\left(\frac13,\frac23;\frac12;A\right) \quad \text{and}\label{alpha.def}\\
 \beta_+(p,q)&:=-\frac{q}{p}
 F\left(\frac23,\frac43;\frac32;
       \frac12(1-\sqrt A)\right).\label{beta.def}
\end{align}
The companion expression to \eqref{beta.def} with $1+\sqrt A$ instead of $1-  \sqrt{A}$ supplies the third root where that
branch is real.  

We will call one root \emph{longer} than another when it has larger absolute value,
and \emph{shorter} when it has smaller absolute value.  Our main results
identify the roots in \eqref{alpha.def} and \eqref{beta.def}.

\begin{longthm}\label{long.thm}
Let $f(t)=t^3+pt+q\in\R[t]$.  If $p \ge 0$ or $q = 0$, then no  root of $f$ is longer than the others.  Otherwise:
\begin{enumerate}[(a)]
\item \label{long.def} $\alpha(p,q)$ is defined and real;
\item \label{long.rt} $f(\alpha(p,q))=0$;
\item \label{long.long} $\alpha(p,q)$ is longer than the other two roots of $f$; and
\item \label{long.sign} $\alpha(p,q)$ has the same sign as $-q$.
\end{enumerate}
\end{longthm}

\begin{shortthm}\label{short.thm}
Let $f(t)=t^3+pt+q\in\R[t]$.  If $p \le 0$ and $\Delta\le0$, then no  root of $f$ is shorter than the
others.  Otherwise:
\begin{enumerate}[(a)]
\item \label{short.def} $\beta_+(p,q)$ is defined and real;
\item \label{short.rt} $f(\beta_+(p,q))=0$; and
\item \label{short.short} $\beta_+(p,q)$ is shorter than the other two roots of $f$.
\end{enumerate}
\end{shortthm}

Thus the two theorems, which appear to be new, give a simple classification.  If $p>0$, only a shortest
root exists.  If $p<0$ and $\Delta<0$, only a longest root exists.  If $p<0$,
$\Delta>0$, and $q\ne0$, both exist.  The loci $p=0$, $q=0$, and $\Delta=0$
account for the relevant ties and multiple roots.  For $t^3-15t-4$, both
theorems apply: $\alpha=4$ is the longest root and
$\beta_+=-2+\sqrt3\approx-0.27$ is the shortest.  (It is amusing to note that in this example, Cardano's method expresses the root $4$ as
$\sqrt[3]{2+11\sqrt{-1}}+\sqrt[3]{2-11\sqrt{-1}}$; see
\cite[\S2.3(c)]{Tignol}.)

Although \eqref{alpha.def} and \eqref{beta.def} are written using the
transcendental function $F$, the two functions involved satisfy polynomial identities.  Those identities make the root verifications short and allow the ordering of
the roots to be read off by elementary real arguments.  On their disks of
convergence, the formulas reduce respectively to a power series in the
discriminant studied in \cite{GenericRoot} and to the classical trinomial series
of Lambert and Lagrange from the 1700s (\autoref{series.sec}).  Finally, \autoref{quartic.sec} applies the
longest-root formula to factor real quartics.

\section{Two hypergeometric functions and their algebraic identities}
\label{hyper.sec}

Write
\[
 G(w):=F\left(\frac13,\frac23;\frac12;w\right),\qquad
 H(w):=F\left(\frac23,\frac43;\frac32;w\right).
\]
Throughout, $F$ denotes the standard branch obtained by analytic continuation
of the Maclaurin series \eqref{hyper.def} to the cut plane
$\C\setminus[1,\infty)$; see \cite[\S15.2(ii)]{DLMF}.  Its restriction to
$(-\infty,1)$ is real analytic and agrees there with the continuation from
$w=0$.  All later evaluations of $G$ and $H$ use this real branch.

Both identities below, and the two power series of Section~\ref{series.sec},
are governed by a single algebraic function.  Let $C(z)$ be the \emph{ternary
generating function}, the unique formal power series with
\begin{equation}\label{Catalan.eq}
  C(z)=1+zC(z)^3,\qquad C(0)=1.
\end{equation}
The defining equation has a combinatorial meaning: a ternary tree is either a
single leaf or a root joined to three ordered ternary subtrees, so if $C$
counts such trees by their number of internal nodes, it must satisfy
\eqref{Catalan.eq}.  Analytically, $C$ is the case $t=3$ of the generalized
binomial series $\mathcal B_t$ of \cite[\S5.4]{Concrete}, so $C=\mathcal B_3$;
by Lagrange inversion its coefficients are the order-$3$ Fuss--Catalan numbers
\begin{equation}\label{Catalan.coeffs}
  [z^n]C(z)=\frac{1}{3n+1}\binom{3n+1}{n}=1,1,3,12,55,\dots,
\end{equation}
OEIS sequence A001764 \cite{OEIS}.  Solving \eqref{Catalan.eq} for $z$ gives
\begin{equation}\label{z.of.C}
  z=\frac{C-1}{C^3},
\end{equation}
which we use repeatedly to pass from a rational expression in $C$ to an
algebraic identity in $z$.  The two series attached to $G$ and $H$ are both
elementary rational functions of $C$.

\begin{lem}\label{sT.lem}
Set $s(z):=\sum_{n\ge0}\binom{3n}{n}z^n$ and
$T(z):=\sum_{n\ge0}\binom{3n+1}{n}z^n$.  Then, as formal power series,
\begin{equation}\label{sT.rational}
  s=\frac{C}{3-2C}\qquad\text{and}\qquad T=\frac{C^2}{3-2C}.
\end{equation}
Consequently
\begin{align}
  (4-27z)\,s(z)^3&=1+3s(z),\label{s.identity}\\
  z(4-27z)\,T(z)^3&=T(z)-1.\label{T.identity}
\end{align}
\end{lem}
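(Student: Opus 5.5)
The plan is to derive both formulas in \eqref{sT.rational} from the Lagrange inversion formula, in its generalized form for powers of the series $C$. Write $C=1+u$. Then \eqref{Catalan.eq} becomes $u=z(1+u)^3$, so $u=z\,\phi(u)$ with $\phi(u)=(1+u)^3$. For a function $\Psi$, Lagrange--B\"urmann in the ``derivative'' form gives
\[
\sum_{n\ge0} z^n [u^n]\bigl(\Psi(u)\phi(u)^n\bigr)=\frac{\Psi(u)}{1-z\phi'(u)}.
\]
Taking $\Psi=1$ and $\Psi=1+u$, the coefficients become $[u^n](1+u)^{3n}=\binom{3n}{n}$ and $[u^n](1+u)^{3n+1}=\binom{3n+1}{n}$. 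This gives $s=1/(1-z\phi'(u))$ and $T=C/(1-z\phi'(u))$.

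Next I simplify the denominator. Since $z\phi'(u)=3z(1+u)^2=3zC^2$, and \eqref{z.of.C} gives $zC^2=(C-1)/C$, we get
\[
1-3zC^2=\frac{3-2C}{C}.
\]
Hence $s=C/(3-2C)$ and $T=C^2/(3-2C)$. All of this is an identity of formal power series, which is legitimate because $3-2C$ has constant term $1$ and is therefore invertible. If a self-contained argument is wanted instead of citing Lagrange--B\"urmann, I could alternatively check that both sides satisfy the same first-order linear ODE with the same initial value. I would cite the formula, since it is standard.

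For the consequences, I substitute $z=(C-1)/C^3$ and clear denominators. First,
\[
4-27z=\frac{4C^3-27C+27}{C^3}=\frac{(2C-3)^2(C+3)}{C^3},
\]
and the factorization is checked by expanding $(4C^2-12C+9)(C+3)$. Then
\[
(4-27z)s^3=\frac{C+3}{3-2C}.
\]
On the other side, $1+3s=(3-2C+3C)/(3-2C)=(3+C)/(3-2C)$, so the two agree, which proves \eqref{s.identity}.

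For \eqref{T.identity}, the left side is
\[
\frac{C-1}{C^3}\cdot\frac{(3-2C)^2(C+3)}{C^3}\cdot\frac{C^6}{(3-2C)^3}=\frac{(C-1)(C+3)}{3-2C}.
\]
The right side is $T-1=(C^2+2C-3)/(3-2C)=(C-1)(C+3)/(3-2C)$, which matches.

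The only real obstacle is stating the correct version of Lagrange inversion, namely the $\Psi/(1-z\phi')$ form rather than the more familiar $[z^n]\Psi(u)=\tfrac1n[u^{n-1}]\Psi'\phi^n$. With that form in hand, the rest is the factorization of $4C^3-27C+27$ followed by routine algebra.
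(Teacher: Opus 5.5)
Your proof is correct. Its second half (substituting $z=(C-1)/C^3$, factoring $4C^3-27C+27=(2C-3)^2(C+3)$, and comparing rational functions of $C$) is the paper's argument, written out in full.

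The rational formulas \eqref{sT.rational}, however, are reached by a different route in the paper. It does not use the $\Psi/(1-z\phi')$ form of Lagrange--B\"urmann. Instead it starts from the Fuss--Catalan coefficients $c_n=[z^n]C$ and the relations $\binom{3n+1}{n}=(3n+1)c_n$ and $\binom{3n}{n}=(2n+1)c_n$. These give $s=C+2zC'$ and $T=C+3zC'$, and differentiating $C=1+zC^3$ then gives $C'=C^4/(3-2C)$. Both routes hinge on the same simplification $1-3zC^2=(3-2C)/C$, which in the paper is the factor multiplying $C'$.

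What each route buys: the paper's version is more elementary. It needs only the ordinary Lagrange inversion already used to find $c_n$, a one-line binomial identity, and implicit differentiation. Yours needs the less familiar second form of inversion, but it never uses the explicit coefficients of $C$. It also explains the denominator as $3-2C=C\,(1-z\phi'(u))$, and it gives the whole family $\sum_{n\ge0}\binom{3n+k}{n}z^n=C^{k+1}/(3-2C)$ at once by taking $\Psi=(1+u)^k$.

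One caution: the self-contained fallback you mention would not work as described, because $s$ satisfies no first-order linear ODE over $\mathbb{Q}(z)$. Taking traces (the three conjugates of $s$ sum to $0$) would reduce any such equation to $s'/s\in\mathbb{Q}(z)$, whereas $s'/s=3(C+3)/(4-27z)$. The natural linear ODE for $s$ is the second-order hypergeometric one. The paper's $s=C+2zC'$ computation is the natural self-contained substitute.
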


\begin{proof}
Write $c_n=[z^n]C=\frac{1}{3n+1}\binom{3n+1}{n}$, so that
$\binom{3n+1}{n}=(3n+1)c_n$ and $\binom{3n}{n}=(2n+1)c_n$, the latter from
$\binom{3n}{n}=\frac{2n+1}{3n+1}\binom{3n+1}{n}$.  Since
$[z^n](zC')=nc_n$, these read
\[
  s=C+2zC'\qquad\text{and}\qquad T=C+3zC'.
\]
Differentiating \eqref{Catalan.eq} gives $C'=C^3+3zC^2C'$, and eliminating
$z$ by \eqref{z.of.C} yields $C'=C^4/(3-2C)$.  Substituting this into the two
displays proves \eqref{sT.rational}.  Finally, putting \eqref{z.of.C} and
\eqref{sT.rational} into \eqref{s.identity} and \eqref{T.identity} makes each
side a single rational function of $C$; clearing denominators, the two
differences are the zero polynomial in $C$, so both identities hold as formal
power series in $z$.
\end{proof}

The two hypergeometric functions are these series in a rescaled variable.
Comparing coefficients in \eqref{hyper.def} and \eqref{Catalan.coeffs} gives
\begin{equation}\label{GH.series}
  G(w)=s(4w/27)\qquad\text{and}\qquad H(w)=T(4w/27)\qquad(|w|<1).
\end{equation}

\begin{lem}\label{G.identity}
On $(-\infty,1)$, the function $G$ satisfies
\begin{equation}\label{G.id}
  4(1-w)G(w)^3=1+3G(w).
\end{equation}
\end{lem}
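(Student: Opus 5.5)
First I will prove \eqref{G.id} on the disk $|w|<1$ using the formal identity \eqref{s.identity}. Then I will extend it to all of $(-\infty,1)$ by analytic continuation.

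\emph{Step 1: the unit disk.} By \eqref{GH.series}, $G(w)=s(4w/27)$ for $|w|<1$. Put $z=4w/27$, so that $4-27z=4(1-w)$. Identity \eqref{s.identity} then reads $4(1-w)G(w)^3=1+3G(w)$ as an identity of formal power series in $w$. To pass from formal series to functions, note that $G$ is given by a convergent Maclaurin series on $|w|<1$. Products of convergent power series converge to the products of their sums, so the formal identity becomes an equality of holomorphic functions on the open unit disk. In particular it holds on $(-1,1)$.

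\emph{Step 2: continuation.} The function $\Phi(w):=4(1-w)G(w)^3-1-3G(w)$ is holomorphic on the cut plane $\C\setminus[1,\infty)$, since the standard branch of $F$ is. This domain is connected, and $\Phi$ vanishes on the unit disk, which is a nonempty open subset. By the identity theorem, $\Phi\equiv 0$ on the whole cut plane. Restricting to $(-\infty,1)$ gives \eqref{G.id}, and here $G$ is the real branch fixed above.

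If one prefers to avoid complex analysis, the same argument works over the reals. $G$ is real analytic on the connected interval $(-\infty,1)$, so $\Phi$ is as well, and $\Phi$ vanishes on $(-1,1)$. A real-analytic function on an interval that vanishes on a subinterval vanishes identically, which again gives \eqref{G.id}.

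\emph{Main obstacle.} The only delicate point is justifying that the series identity transfers to the continued branch. This rests on the convention, stated above, that $F$ on $(-\infty,1)$ is the analytic continuation of the Maclaurin series. I expect no substantial difficulty beyond citing the identity theorem. The algebra itself is already contained in Lemma~\ref{sT.lem}.
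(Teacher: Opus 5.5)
Your proposal is correct and follows the paper's own argument: substitute $z=4w/27$ into \eqref{s.identity} and use $G(w)=s(4w/27)$ to get the identity on $|w|<1$, then extend it to $(-\infty,1)$ by the identity theorem. Your remark that the formal identity transfers to actual sums there, because $s$ converges for $|z|<4/27$, is a detail the paper leaves implicit.
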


\begin{proof}
Substituting $z=4w/27$ in \eqref{s.identity} and using $G(w)=s(4w/27)$ gives
\eqref{G.id} for $|w|<1$.  Both sides are analytic on $(-\infty,1)$ and agree
on a neighborhood of $0$, so they agree throughout.
\end{proof}

\begin{lem}\label{H.identity}
On $(-\infty,1)$, the function $H$ satisfies
\begin{equation}\label{H.id}
  16w(w-1)H(w)^3=27\bigl(1-H(w)\bigr).
\end{equation}
\end{lem}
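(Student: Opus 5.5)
The plan is to argue exactly as in the proof of Lemma~\ref{G.identity}. First we verify \eqref{H.id} for $|w|<1$ from \eqref{T.identity}, and then we extend it to all of $(-\infty,1)$ by analytic continuation.

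For the first step, substitute $z=4w/27$ into \eqref{T.identity}, which holds as an identity of formal power series by Lemma~\ref{sT.lem}. The factor $4-27z$ becomes $4-4w=4(1-w)$. The left side is then $\frac{4w}{27}\cdot 4(1-w)\,T(4w/27)^3$. Using \eqref{GH.series}, $T(4w/27)=H(w)$ for $|w|<1$, so this reads
\[
\frac{16w(1-w)}{27}H(w)^3 = H(w)-1 .
\]
Multiplying by $27$ and moving the sign across gives $16w(w-1)H(w)^3=27(1-H(w))$, which is \eqref{H.id}.

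Passing from formal to analytic identities needs a little care. The series $T(z)$ has radius of convergence $4/27$, because $\binom{3n+1}{n}$ grows like $(27/4)^n$ up to polynomial factors. Hence $T(4w/27)$ converges for $|w|<1$, and there the formal identity of power series becomes an identity of convergent power series, that is, of analytic functions. I expect this to be the only point needing any comment. The growth rate is standard, and it also follows directly from the fact that $F$ has radius of convergence $1$.

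For the extension, both sides of \eqref{H.id} are polynomials in $w$ and $H(w)$. The real branch $H$ is real analytic on $(-\infty,1)$, as fixed at the start of this section, so both sides are real analytic functions on the connected interval $(-\infty,1)$. They agree on $(-1,1)$, so by the identity theorem they agree on all of $(-\infty,1)$.
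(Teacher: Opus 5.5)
Your proposal is correct and follows the paper's proof: you substitute $z=4w/27$ into \eqref{T.identity}, use $H(w)=T(4w/27)$ on $|w|<1$, and then extend to all of $(-\infty,1)$ by analytic continuation (the identity theorem on the connected interval). Your remark on the radius of convergence $4/27$ of $T$ usefully makes explicit a step the paper leaves implicit.
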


\begin{proof}
Substituting $z=4w/27$ in \eqref{T.identity} and using $H(w)=T(4w/27)$ gives
\eqref{H.id} for $|w|<1$; analytic continuation along $(-\infty,1)$ gives the
stated identity.
\end{proof}

\begin{lem}\label{positive.branches}
For every $w<1$, one has
\[
  G(w)>0\qquad\hbox{and}\qquad H(w)>0.
\]
\end{lem}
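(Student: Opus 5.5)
We need to prove that $G$ and $H$ are positive on $(-\infty,1)$. The plan is a continuity argument: both functions are real analytic, hence continuous, on $(-\infty,1)$, and both equal $1$ at $w=0$. So it suffices to show that neither vanishes anywhere on $(-\infty,1)$. The intermediate value theorem then forces each to keep the sign it has at $0$, namely positive.

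For $G$, suppose $G(w_0)=0$ for some $w_0<1$. Substituting into the identity \eqref{G.id} of Lemma~\ref{G.identity} gives left side $4(1-w_0)\cdot 0=0$ and right side $1+3\cdot 0=1$. This is a contradiction, so $G$ has no zero on $(-\infty,1)$. Since $G(0)=1>0$, we get $G>0$ throughout.

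For $H$, the same substitution into \eqref{H.id} of Lemma~\ref{H.identity} at a hypothetical zero $w_0$ gives left side $0$ and right side $27(1-0)=27$. This is again impossible, so $H$ never vanishes on $(-\infty,1)$, and $H(0)=1$ gives $H>0$.

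There is essentially no obstacle. The only point needing care is that the identities \eqref{G.id} and \eqref{H.id} hold on all of $(-\infty,1)$, not merely on $|w|<1$. Lemmas~\ref{G.identity} and~\ref{H.identity} already establish this via analytic continuation of the real branch, together with continuity of that branch on the whole interval. The connectedness of $(-\infty,1)$ is what lets the sign at $0$ propagate.
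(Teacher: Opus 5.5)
Your proposal is correct and follows essentially the same argument as the paper: $G$ and $H$ are continuous on the connected interval $(-\infty,1)$ with $G(0)=H(0)=1$, and the identities \eqref{G.id} and \eqref{H.id} rule out zeros, since plugging in a zero gives $0=1$ and $0=27$ respectively. You also correctly note that the argument relies on those identities holding on all of $(-\infty,1)$, which Lemmas~\ref{G.identity} and~\ref{H.identity} supply by analytic continuation.
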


\begin{proof}
Both functions are continuous on the connected interval $(-\infty,1)$ and
satisfy $G(0)=H(0)=1$.  Equations \eqref{G.id} and \eqref{H.id} show that $G(w)$ and $H(w)$ never vanish.  Neither function
can therefore change sign on $(-\infty,1)$, and both are positive there.
\end{proof}

The Shortest Root Theorem \ref{short.thm} is written to be parallel to the Longest Root Theorem \ref{long.thm}, except that it is missing part \ref{long.sign}.  It is missing for a good reason: The shortest root 
has the sign of $-q$ on $\{p>0\}$ and the sign of $q$ on
$\{p<0,\Delta>0\}$.  This follows from Lemma~\ref{positive.branches} and the
prefactors in \eqref{alpha.def} and \eqref{beta.def}.

\section{The longest root}

We first record where the arguments of the hypergeometric functions lie.

\begin{lem}\label{arguments.lem}
Suppose $p \ne 0$ and let $A=-\Delta/(4p^3)=1+27q^2/(4p^3)$ as in \eqref{A.def}.
\begin{enumerate}[(i)]
\item \label{arguments.long} If $p<0$ and $q\ne0$, then $A<1$.
\item \label{arguments.short} If $p>0$ or $\Delta>0$, then $A\ge0$, and
$\frac12(1-\sqrt A)\le\frac12<1$.
\end{enumerate}
\end{lem}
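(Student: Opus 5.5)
The plan is to read both parts off the two expressions for $A$ in \eqref{A.def}, namely $A=1+27q^2/(4p^3)$ and $A=-\Delta/(4p^3)$. In each case we only need the signs of $p^3$, $q^2$ and $\Delta$; no further information about the roots of $f$ is required.

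For part \ref{arguments.long}, suppose $p<0$ and $q\ne0$. Then $q^2>0$ and $p^3<0$, so the correction term $27q^2/(4p^3)$ is strictly negative. Hence $A<1$. No lower bound on $A$ is asserted, and none is needed, since $G$ is defined on all of $(-\infty,1)$.

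For part \ref{arguments.short}, we split according to the hypothesis.
\begin{enumerate}[(1)]
\item If $p>0$, then $27q^2/(4p^3)\ge0$, so $A\ge1\ge0$.
\item If $p<0$ and $\Delta>0$, then $A=-\Delta/(4p^3)$ is a quotient of the negative number $-\Delta$ by the negative number $4p^3$. Hence $A>0$.
\end{enumerate}
The standing assumption $p\ne0$ removes the remaining case. Indeed, if $p=0$ then $\Delta=-27q^2\le0$, so $\Delta>0$ already forces $p\ne0$.

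In either case $A\ge0$, so $\sqrt A$ is a well-defined nonnegative real number. Therefore $\tfrac12(1-\sqrt A)\le\tfrac12<1$.

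There is no real obstacle here. The only point needing care is to use the second form of $A$ in the case $\Delta>0$, $p<0$, rather than the first form.
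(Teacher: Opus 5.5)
Your proof is correct and follows the paper's argument essentially step for step. You use the sign of $27q^2/(4p^3)$ for part (i) and for the case $p>0$, and the quotient $-\Delta/(4p^3)$ of two negative numbers when $\Delta>0$; the only cosmetic difference is that the paper deduces $p<0$ directly from $4p^3=-\Delta-27q^2<0$, whereas you obtain it from your case split together with $p\ne0$, and both are valid.
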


\begin{proof}
If $p<0$, then $27q^2/(4p^3)\le0$, with equality only for $q=0$, proving
\ref{arguments.long}.

For \ref{arguments.short}, we show $A\ge0$ in each of the two cases; the bound
$\frac12(1-\sqrt A)\le\frac12<1$ is then immediate, since $A\ge0$ makes
$\sqrt A$ real and nonnegative.  If $p>0$, then $27q^2/(4p^3)\ge0$, so
$A=1+27q^2/(4p^3)\ge1\ge0$.  If instead $\Delta>0$, then \eqref{disc} gives
$4p^3=-\Delta-27q^2<0$, hence $p<0$; both the numerator $-\Delta$ and the
denominator $4p^3$ of $A=-\Delta/(4p^3)$ are then negative, so $A>0$.  In
either case $A\ge0$, proving \ref{arguments.short}.
\end{proof}

\begin{proof}[Proof of the Longest Root Theorem~\ref{long.thm}]
Suppose first that we are in the ``otherwise'' case, meaning that $p < 0$ and $q\ne0$.  By Lemma~\ref{arguments.lem}, $A<1$, so
\[
  \alpha:=\frac{3q}{p}G(A)
\]
is real, i.e., \ref{long.def}.

We verify that it is a root, \ref{long.rt}.  From \eqref{disc} and \eqref{A.def},
\begin{equation}\label{one.minus.A}
  1-A=1+\frac{\Delta}{4p^3}
      =\frac{4p^3+\Delta}{4p^3}
      =\frac{-27q^2}{4p^3}.
\end{equation}
Writing $G=G(A)$ and using \eqref{G.id},
\[
 \alpha^3
 =\frac{27q^3}{p^3}G^3
 =\frac{27q^3}{p^3}\frac{1+3G}{4(1-A)}
 =-q(1+3G).
\]
Consequently
\[
 f(\alpha)=-q(1+3G)+p\frac{3q}{p}G+q=0.
\]

By Lemma~\ref{positive.branches}, $G(A)>0$.  Since $p<0$, it follows at once
that $\alpha$ has the sign of $-q$, \ref{long.sign}.

For \ref{long.long}, suppose first that $\Delta>0$ and $p \ne 0$, so the three roots are distinct and real.  If
$q>0$, their product is $-q<0$, so there is one negative root and two positive
roots.  Since their sum is zero, the absolute value of the negative root is
the sum of the two positive roots and is therefore strictly larger than the
absolute value of either.  Thus the unique root with sign $-q$ is the longest.
The case $q<0$ is obtained by reversing all signs.  Hence $\alpha$ is the
longest root.

If $\Delta=0$, then
\begin{equation}\label{double.factor}
 f(t)=\left(t-\frac{3q}{p}\right)
      \left(t+\frac{3q}{2p}\right)^2.
\end{equation}
Here $A=0$ and $G(0)=1$, so
$\alpha=3q/p$, the simple root.  Its modulus is twice that of the double root
$-3q/(2p)$, and it has the sign of $-q$.

Finally, suppose $\Delta<0$.  Write the roots as a real number $z$ and a
complex-conjugate pair $u,\bar u$.  Since $u+\bar u=-z$,
\begin{equation}\label{modulus.identity}
 p=u\bar u+z(u+\bar u)=|u|^2-z^2.
\end{equation}
As $p<0$, this gives $|z|>|u|$.  The real root is therefore the longest, and
$\alpha=z$.  Also $q=-z|u|^2$, so $z$ has the sign of $-q$.

For the converse, if $p>0$ then $\Delta<0$, and
\eqref{modulus.identity} gives $|z|<|u|$; the two nonreal roots tie, so no
root is longest.  If $p=0$, the three cube roots of $-q$ have equal modulus.
If $q=0$, the roots are $0$ and $\pm\sqrt{-p}$ when $p<0$, or $0$ and a
conjugate pair of equal modulus when $p>0$; again no root is uniquely longest.
\end{proof}

\section{The shortest root}

\begin{proof}[Proof of the Shortest Root Theorem~\ref{short.thm}]
Suppose first that we are in the ``otherwise'' case, meaning that (i) $p > 0$ or (ii) $p \ne 0$ and $\Delta > 0$.
By Lemma~\ref{arguments.lem}, $A\ge0$ and
\[
 w:=\frac12(1-\sqrt A)<1.
\]
Thus
\[
 \beta_+:=-\frac{q}{p}H(w)
\]
is real, \ref{short.def}.  

Since $\sqrt A=1-2w$, one has
\[
 \frac{27q^2}{4p^3}=A-1=(1-2w)^2-1=4w(w-1),
\]
so
\begin{equation}\label{ww.relation}
 w(w-1)=\frac{27q^2}{16p^3}.
\end{equation}
Writing $H=H(w)$ and using \eqref{H.id} and \eqref{ww.relation},
\[
 27(1-H)=16w(w-1)H^3=\frac{27q^2}{p^3}H^3.
\]
Thus $q^2H^3=p^3(1-H)$, and
\[
 \beta_+^3=-\frac{q^3}{p^3}H^3=q(H-1).
\]
It follows that
\[
 f(\beta_+)=q(H-1)-qH+q=0,
\]
verifying \ref{short.rt}.

For \ref{short.short}, if $p>0$, then $\Delta<0$ and the roots are a real root $z$ and a conjugate
pair $u,\bar u$.  Equation \eqref{modulus.identity} gives $|z|<|u|$, so the
real root is shortest.  Since $\beta_+$ is real, it equals $z$.  

If $\Delta>0$ and $p \ne 0$, then $p<0$ and the three roots are real.  If $q=0$, then
$\beta_+=0$, which is plainly the shortest root among
$0,\pm\sqrt{-p}$.  Assume $q\ne0$.  On each of the two connected regions
\[
 \{(p,q):p<0,\ \Delta>0,\ q>0\}
 \quad\hbox{and}\quad
 \{(p,q):p<0,\ \Delta>0,\ q<0\},
\]
the value $\beta_+(p,q)$ is a continuous real root of $f$.  No two roots can
have the same modulus there: two real roots of equal modulus would be
opposites, forcing the third root, and hence $q$, to be zero.  Therefore the
rank of $\beta_+$ among the three roots ordered by modulus is constant on
each region.  Keeping $p<0$ fixed and letting $q\to0$ with fixed sign, one
has $A\to1$, $w\to0$, and $H(w)\to H(0)=1$, so
$\beta_+(p,q)\to0$, while the other two roots tend to
$\pm\sqrt{-p}$.  Hence $\beta_+$ is the shortest root throughout both
regions.  

In the excluded region $p<0$ and $\Delta<0$, the unique real root is longest
by the proof of Theorem~\ref{long.thm}, while the conjugate roots tie in
modulus; hence no root is uniquely shortest.  If $p<0$ and $\Delta=0$, the
factorization \eqref{double.factor} shows that the double root
$-3q/(2p)$ has smaller modulus than the simple root but occurs twice, so no
root is strictly shorter than both others.  If $p = 0$, then all roots have the same length.
\end{proof}

\section{Non-depressed cubics}
\label{nondepressed.sec}

The theorems above are stated for depressed cubics, but they apply to any
cubic after a translation.  Given
\[
  g(t)=t^3+c_1t^2+c_2t+c_3\in\R[t],
\]
the substitution $t=x-c_1/3$ removes the quadratic term and produces the
depressed cubic
\[
  g\!\left(x-\frac{c_1}{3}\right)=x^3+px+q,
  \qquad
  p=c_2-\frac{c_1^2}{3},
  \quad
  q=\frac{2c_1^3}{27}-\frac{c_1c_2}{3}+c_3 .
\]
This translation shifts every root by the same amount $c_1/3$ and leaves the
discriminant unchanged.  The longest and shortest roots of $f$ correspond to the roots of $g$ that are farthest from or closest to the centroid $-c_1/3$ of the roots of $g$.

\begin{eg}[A cubic with a double root]
Take $g(t)=t(t-r)^2$ with $r\ne0$, whose roots are $0$ and the double root
$r$.  Expanding, $c_1=-2r$, $c_2=r^2$, $c_3=0$, so the centroid is
$-c_1/3=2r/3$, and the depression $t=x+2r/3$ yields
\[
  x^3+px+q,\qquad p=-\frac{r^2}{3},\quad q=\frac{2r^3}{27}.
\]
Here $p<0$ and $q\ne0$, so a longest root exists.  The depressed roots are
$-2r/3$ (simple) and $r/3$ (double).  The 
longest of these is $-2r/3$, which translates to the root $0$ of $g$, the root farthest from the centroid $2r/3$.
\end{eg}

\begin{eg}[Scaling]\label{scaling.eg}
For nonzero $m\in\R$ and
$g(t)=t^3+c_1t^2+c_2t+c_3$, put $h(t)=g(mt)/m^3$.  If $p_g,q_g$ and
$p_h,q_h$ are the respective depressed parameters, then
$p_g=m^2p_h$, $q_g=m^3q_h$, and $\Delta_g=m^6\Delta_h$.  Hence
$A=-\Delta/(4p^3)$ is unchanged, while the distinguished roots scale by
$m$.
\end{eg}

\section{The underlying series: discriminant and trinomial}
\label{series.sec}

In the region where the defining series \eqref{hyper.def} converges, Zucker's
functions reduce to two power-series solutions of the cubic.

\subsection*{The discriminant series.}
By \eqref{GH.series}, $G(A)=s(4A/27)$, where $s(z)=\sum_{n\ge0}\binom{3n}{n}z^n$
is the companion tree series of Lemma~\ref{sT.lem}.
Since $4A/27=-\Delta/(27p^3)$, formula \eqref{alpha.def} becomes
\begin{equation}\label{discriminant.series}
 \alpha(p,q)=\frac{3q}{p}\,
 s\!\left(\frac{-\Delta}{27p^3}\right)
 =\frac{3q}{p}
 \sum_{n\ge0}\binom{3n}{n}
 \left(\frac{-\Delta}{27p^3}\right)^n,
\end{equation}
a series that appeared naturally in \cite{GenericRoot}.  The hypergeometric series for $G(A)$ converges absolutely when
$|A|<1$,
which is equivalent to $|\Delta|<4|p|^3$.  Equivalently, Stirling's formula
shows that the series in \eqref{discriminant.series} has radius
$4/27$ in the variable $-\Delta/(27p^3)$.  Its coefficients
$\binom{3n}{n}=1,3,15,84,\ldots$ form OEIS sequence A005809
\cite{OEIS}.

\subsection*{The trinomial series.}
For $\beta_+$, tracing through the proof of the Shortest Root Theorem, we have
\[
w(w-1) = -\frac{27}{16} z \quad \text{for} \quad z = \frac{q^2}{(-p)^3}.
\]
Plugging that into \eqref{H.id} gives $zH(w)^3 = H(w) - 1$.  Because $H(w) \to 1$ as $z \to 0$, $H(w)$ is $C(z)$ from \eqref{Catalan.eq}, which gives:
\begin{equation}\label{trinomial.series}
 \beta_+(p,q)=-\frac{q}{p}
 \,C\!\left(\frac{q^2}{(-p)^3}\right).
\end{equation}
This is the classical series solution of a trinomial going back to Lambert
\cite[\S\S38--40]{Lambert} and Lagrange \cite[No.~12]{Lagrange:res}; see also
$X_{1,-1}$ in \cite{Sturmfels} or Th.~10 in \cite{WildRubine}.

\medskip
The point of Zucker's hypergeometric forms is that the chosen branches
continue these series past their disks of convergence.
\section{Factoring real quartic polynomials}\label{quartic.sec}

Every real quartic is a product of two real quadratics, but exhibiting the
factorization for a given quartic requires a positive root of an auxiliary
cubic resolvent.  We show that applying the longest-root formula to the
\emph{depressed} resolvent and then translating back supplies such a root for all but a negligible fraction of depressed quartics, thereby providing an explicit factorization of those quartics.

Let
\begin{equation}\label{quartic}
 g(t)=t^4+ct^2+dt+e
\end{equation}
be a depressed quartic.  Its \emph{Descartes cubic resolvent} is
\begin{equation}\label{resolvent}
 g_3(t)=t^3+2ct^2+(c^2-4e)t-d^2.
\end{equation}
The classical link between the roots of $g_3$ and factorizations of $g$ is the following; see
\cite[Appendix, \S15]{Chrystal} or \cite{Brookfield}.

\begin{lem}\label{quartic.factor.lem}
If $g_3(\gamma^2)=0$ for some real $\gamma\ne0$, then
\[
 g(t)=\left(t^2+\gamma t+
 \frac{\gamma^3+c\gamma-d}{2\gamma}\right)
 \left(t^2-\gamma t+
 \frac{\gamma^3+c\gamma+d}{2\gamma}\right).
\]
\end{lem}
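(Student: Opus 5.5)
The plan is to multiply out the right-hand side and match coefficients with $g(t)=t^4+ct^2+dt+e$; the hypothesis $g_3(\gamma^2)=0$ will be used only for the constant term. Write the two factors as $t^2+\gamma t+a$ and $t^2-\gamma t+b$, where
\[
 a=\frac{\gamma^3+c\gamma-d}{2\gamma},\qquad b=\frac{\gamma^3+c\gamma+d}{2\gamma},
\]
both defined because $\gamma\ne0$. The product is
\[
 t^4+(a+b-\gamma^2)t^2+\gamma(b-a)t+ab .
\]
So it suffices to check three equalities: $a+b-\gamma^2=c$, $\gamma(b-a)=d$, and $ab=e$.

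The first two are immediate. We have $a+b=(2\gamma^3+2c\gamma)/(2\gamma)=\gamma^2+c$, which gives the $t^2$ coefficient. We also have $b-a=2d/(2\gamma)=d/\gamma$, which gives the $t$ coefficient. Neither of these uses the resolvent, and the cubic coefficient vanishes automatically because the linear terms $\pm\gamma t$ cancel.

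The constant term is the only step that needs the hypothesis. Compute
\[
 ab=\frac{(\gamma^3+c\gamma)^2-d^2}{4\gamma^2}.
\]
Then $ab=e$ is equivalent to $\gamma^6+2c\gamma^4+c^2\gamma^2-d^2-4e\gamma^2=0$. With $u=\gamma^2$, this reads $u^3+2cu^2+(c^2-4e)u-d^2=0$, which is exactly $g_3(\gamma^2)=0$ by \eqref{resolvent}. This closes the argument.

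No step is a real obstacle; the only care needed is in expanding $(\gamma^3+c\gamma)^2$ and tracking the factor $4\gamma^2$. Realness of $\gamma$ is not needed for the identity itself. It matters only so that the factors are real quadratics, which is the use the paper makes of the lemma afterwards.
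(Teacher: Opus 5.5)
Your proof is correct. The expansion $t^4+(a+b-\gamma^2)t^2+\gamma(b-a)t+ab$, the two immediate coefficient checks, and the reduction of $ab=e$ to $\gamma^6+2c\gamma^4+(c^2-4e)\gamma^2-d^2=0$, which is $g_3(\gamma^2)=0$, all hold as written.

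The paper gives no proof of its own; it defers to the classical sources it cites (Chrystal, Brookfield). There the lemma comes out of Descartes's method run forward. One posits $g=(t^2+\gamma t+a)(t^2-\gamma t+b)$ and solves the $t^2$ and $t$ coefficient equations for $a$ and $b$. The constant-term equation then becomes the resolvent in $\gamma^2$.

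Your argument is that same computation read backward as a direct verification, which is all the stated lemma needs. Your closing remark is also accurate: the identity holds for any nonzero $\gamma$, even complex. Realness matters only so that the two quadratic factors are real, which is what the paper needs afterwards.
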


To apply the Longest Root Theorem~\ref{long.thm} we depress \eqref{resolvent}
as in \autoref{nondepressed.sec}.  The resolvent has quadratic coefficient
$2c$, so its centroid is $-2c/3$, and the substitution
\begin{equation}\label{resolvent.shift}
  t=x-\frac{2c}{3}
\end{equation}
yields
\begin{equation}\label{depressed.resolvent}
 g_3\left(x-\frac{2c}{3}\right)=x^3+p_3x+q_3,
\end{equation}
with
\begin{equation}\label{p3q3}
 p_3=-\frac{c^2}{3}-4e,
 \qquad
 q_3=-\frac{2c^3}{27}+\frac{8ce}{3}-d^2.
\end{equation}
Whenever $p_3<0$ and $q_3\ne0$, let $\alpha_3(c,d,e)$ be the longest root of
the depressed resolvent \eqref{depressed.resolvent}---the root of $g_3$
farthest from its centroid---and translate it back by setting
\begin{equation}\label{Phi.def}
  \Phi(c,d,e):=\alpha_3(c,d,e)-\frac{2c}{3}.
\end{equation}
By \eqref{resolvent.shift}, $\Phi$ is a root of the original resolvent
$g_3$.

For a depressed quartic \eqref{quartic}, define its \emph{naive height} to be
$\max\{\abs{c},\abs{d},\abs{e}\}$, following \cite[p.~2]{Borwein}.  The idea behind the next result is that, if we look at a large enough cube of depressed quartics (meaning all quartics with naive height at most some large $h$), then almost all of them have a resolvent cubic with a positive root and therefore an explicit factorization.

\begin{prop}\label{quartic.density}
For $h>0$, let $S_h\subseteq[-h,h]^3$ be the set of $(c,d,e)$ with $d\ne0$,
$p_3<0$, and $q_3\ne0$ for which the translated distinguished root
$\Phi(c,d,e)$ in \eqref{Phi.def} is positive.  Then
\[
  \frac{\vol(S_h)}{(2h)^3}\longrightarrow1
  \qquad(h\to\infty).
\]
\end{prop}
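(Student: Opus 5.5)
We will prove this by scaling. Under $(c,d,e)=(hc',hd',he')$ with $(c',d',e')\in[-1,1]^3$, the resolvent becomes a small perturbation of $t(t+c)^2$ whenever $c'$ is bounded away from $0$. For such a perturbed cubic the root of $g_3$ farthest from the centroid is forced to be the one near $0$, and it is positive because the product of the roots is $d^2>0$.

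\textbf{Step 1 (scaling).} Write $(c,d,e)=(hc',hd',he')$, so that $\vol(S_h)/(2h)^3$ is the normalized volume of the set $S'_h$ of $(c',d',e')\in[-1,1]^3$ whose image lies in $S_h$. Fix $\varepsilon>0$ and let $K_\varepsilon=\{|c'|\ge\varepsilon\}\subseteq[-1,1]^3$. The complement of $K_\varepsilon$ has normalized volume $\varepsilon$. The loci $\{d'=0\}$ and $\{q_3=0\}$ are proper algebraic subsets, so they have measure zero. It therefore suffices to find $h_0$ such that for all $h\ge h_0$, every point of $K_\varepsilon$ with $d'\ne0$ and $q_3\ne0$ lies in $S'_h$. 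Then $\liminf \vol(S_h)/(2h)^3\ge1-\varepsilon$, and we let $\varepsilon\to0$.

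\textbf{Step 2 (sign conditions and root locations).} By \eqref{p3q3}, $p_3=-h^2c'^2/3-4he'$, which is negative on $K_\varepsilon$ once $h>12/\varepsilon^2$. Now put $t=hs$ in \eqref{resolvent}:
\[
 h^{-3}g_3(hs)=s(s+c')^2-\frac{4e's+d'^2}{h}.
\]
As $h\to\infty$ this converges to $s(s+c')^2$ uniformly for $(c',d',e')\in K_\varepsilon$ and $s$ in compact sets. The limit has a simple root $0$ and a double root $-c'$, and these are separated by at least $\varepsilon$. By continuity of roots (Hurwitz/Rouché, applied uniformly over the compact set $K_\varepsilon$), for $h\ge h_0$ the rescaled cubic has:
\begin{itemize}
\item exactly one root $s_0$ with $|s_0|<\varepsilon/4$, which is real, since a nonreal root would come with its conjugate in the same disc;
\item the other two roots $s_1,s_2$, real or conjugate, within $\varepsilon/4$ of $-c'$.
\end{itemize}

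\textbf{Step 3 (identifying $\Phi$).} The centroid of the roots is $-2c'/3$ in the $s$-variable. The root $s_0$ lies at distance at least $2|c'|/3-\varepsilon/4$ from it, while $s_1,s_2$ lie at distance at most $|c'|/3+\varepsilon/4$. Since $|c'|\ge\varepsilon$, the first quantity is strictly larger. Assume $q_3\ne0$. The Longest Root Theorem~\ref{long.thm} applies to the depressed resolvent \eqref{depressed.resolvent}, and by part~\ref{long.long} $\alpha_3$ is the unique longest root. Hence $\alpha_3$ corresponds to $hs_0$, and $\Phi=hs_0$ by \eqref{Phi.def}.

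\textbf{Step 4 (positivity).} The product of the roots of $g_3$ is $d^2>0$, so $s_0\,s_1s_2=d'^2/h>0$. Here $s_1s_2$ is either $|s_1|^2>0$ or a product of two reals each within $\varepsilon/4$ of $-c'$, which is again positive. Since $d'\ne0$, this gives $s_0>0$, so $\Phi>0$ and the point lies in $S'_h$. This completes the argument.

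\textbf{Main obstacle.} The step needing most care is the uniformity in Step 2: we need a single $h_0$ that works for all of $K_\varepsilon$. I would get this by applying Rouché's theorem on circles of radius $\varepsilon/4$ about $0$ and about $-c'$. On those circles the limit $s(s+c')^2$ is bounded below by a constant depending only on $\varepsilon$, while the perturbation term $(4e's+d'^2)/h$ is $O(1/h)$ uniformly.
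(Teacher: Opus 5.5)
The strategy works, but the inequality at the heart of Step~3 is false as written. The difference of your two bounds is
\[
\Bigl(\tfrac23|c'|-\tfrac{\varepsilon}{4}\Bigr)-\Bigl(\tfrac13|c'|+\tfrac{\varepsilon}{4}\Bigr)=\tfrac13|c'|-\tfrac{\varepsilon}{2}.
\]
This is negative for $\varepsilon\le|c'|<\tfrac32\varepsilon$; at $|c'|=\varepsilon$ you are comparing $5\varepsilon/12$ with $7\varepsilon/12$. So on that part of $K_\varepsilon$ your estimates do not show that $s_0$ is the root farthest from the centroid.

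The repair is cosmetic: run the Rouch\'e argument with a radius $r<\varepsilon/6$, say $r=\varepsilon/8$. On the circles $|s|=r$ and $|s+c'|=r$ the limit satisfies $|s(s+c')^2|\ge r(\varepsilon-r)^2$ and $|s(s+c')^2|\ge(\varepsilon-r)r^2$ respectively, and these bounds still depend only on $\varepsilon$. The perturbation is still $O(1/h)$ uniformly, and the discs stay disjoint. The reality of $s_0$ and the positivity of $s_1s_2$ go through unchanged. The gap becomes $\tfrac13|c'|-2r\ge\varepsilon/12>0$.

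With that change your proof is correct, and it is a close relative of the paper's. The paper applies the weighted scaling $g(mt)/m^4$ with $m=|c|^{1/2}$, which sends $(c,d,e)$ to $\bigl(\pm1,\,d/|c|^{3/2},\,e/c^2\bigr)$. On the slab $|c|\ge\eta h$ this lands in a small neighborhood of one of the two points $P_\pm=(\pm1,0,0)$, where $g_3=t(t\pm1)^2$ and $\Phi(P_\pm)=0$. So pointwise continuity of the roots at two points suffices, at the price of checking via Example~\ref{scaling.eg} that the sign of $\Phi$ is scale invariant.

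Your substitution $t=hs$ is the same scaling with $m=h^{1/2}$ in place of $|c|^{1/2}$. It leaves $c'$ free in $\varepsilon\le|c'|\le1$, so you need root control uniform over a compact family, hence Rouch\'e. In exchange, you never need the scale-invariance step, and you identify $\Phi$ by explicit distance estimates rather than by continuity from $P_\pm$. Both arguments finish the same way, using that the product of the roots of $g_3$ is $d^2>0$.
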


\begin{proof}
At the points $P_\pm=(\pm1,0,0)$, the resolvent is
\[
 g_3(t)=t(t\pm1)^2.
\]
Equation \eqref{p3q3} gives $p_3=-1/3$ and
$q_3=\mp2/27$ at these points, so the longest-root formula is defined in
neighborhoods of both.
After the shift \eqref{resolvent.shift}, the roots $0,\pm1,\pm1$ of $g_3$ correspond to $\pm 2/3,\mp 1/3,\mp 1/3$ for $P_\pm$.  Since $\pm 2/3$ is the longest of these, $\Phi(P_\pm)=0$.

We next observe that $\Phi > 0$ on a fixed neighborhood of $P_\pm$.
The distinguished root $\Phi$ varies continuously with $(c,d,e)$ on the
open set $p_3<0$, $q_3\ne0$.  By continuity of the roots, there is
$\varepsilon>0$ such that on
\[
 U_\pm:=\{|c\mp1|<\varepsilon,
           |d|<\varepsilon,
           |e|<\varepsilon\}
\]
the two roots of $g_3$ other than $\Phi$ remain within $1/2$ of $\mp1$.
Their product is therefore positive.  Since the product of all three roots
of $g_3$ is $d^2$, it follows that
$\Phi>0$ on $U_\pm\cap\{d\ne0\}$.

Next we observe that scaling can move $g$ into $U_+ \cup U_-$.
For $m>0$, put $f(t)=g(mt)/m^4$.  Then
\[
 f_3(t)=\frac{g_3(m^2t)}{m^6}.
\]
Depressing the two resolvents is compatible with this scaling, and
Example~\ref{scaling.eg} shows that their distinguished roots differ by the
positive factor $m^2$.  Thus the sign of $\Phi$ is scale invariant.  Taking
$m=\abs{c}^{1/2}$, a point with $c\ne0\ne d$ lies in $S_h$ as soon as
\[
 \left(\frac{c}{\abs{c}},
       \frac{d}{\abs{c}^{3/2}},
       \frac{e}{c^2}\right)
 \in U_+\cup U_-.
\]

Fix $\eta\in(0,1)$.  The slab $\abs{c}\ge\eta h$ fills a proportion $1-\eta$ of
$[-h,h]^3$, and on it
\[
 \frac{\abs{d}}{\abs{c}^{3/2}}\le\eta^{-3/2}h^{-1/2},
 \qquad
 \frac{\abs{e}}{c^2}\le\eta^{-2}h^{-1}.
\]
Both quantities are less than $\varepsilon$ for large $h$.  Apart from the
measure-zero sets $d=0$ and $q_3=0$, every point of this slab therefore
normalizes into $U_+\cup U_-$ and lies in $S_h$.  Hence
\[
 \liminf_{h\to\infty}\frac{\vol(S_h)}{(2h)^3}\ge1-\eta.
\]
As $\eta$ was arbitrary, the limit is $1$.
\end{proof}

\subsection*{Acknowledgements} We thank Sateesh Mane and Seiichi Manyama for helpful pointers and suggestions, and J-P.~Serre for posing the problem that eventually led to this paper.  

\bibliographystyle{amsalpha}
\bibliography{skip_master,cubic}

\end{document}